\theoremstyle{plain}
\newtheorem{theorem}{Theorem}[section]
\theoremstyle{plain}
\theoremstyle{plain}
\newtheorem{lemma}[theorem]{Lemma}
\newcommand{\be}{\begin{equation}}
\newcommand{\ee}{\end{equation}}
\newcommand{\bea}{\begin{eqnarray}}
\newcommand{\eea}{\end{eqnarray}}
\newcommand{\bes}{\begin{subequations}}
\newcommand{\ees}{\end{subequations}}
\begin{document}
\author{Shohreh Gholizadeh Siahmazgi}
\email{ghols18@wfu.edu}

\author{Stephen B. Robinson}
\email{sbr@wfu.edu}

\address{Department of Mathematics, Wake Forest University, Winston-Salem, North Carolina 27109, USA}

\title{\large \bf   On The Convergence of the Variational Iteration Method Applied to Variable Coefficient Klein-Gordon Problems}

\begin{abstract}

In this paper, we give a  formulation of  the variational iteration method that makes it suitable  for the analysis of the solutions of Klein-Gordon equations with variable coefficients.
 We particularly study a Klein-Gordon problem which has solutions in terms of Airy functions. We prove that the sequence of approximate solutions generated by the variational iteration method for such Klein-Gordon equation converges to Airy functions.

  \end{abstract}

\maketitle

\section{Introduction}

In this paper we study several aspects of the Variational Iteration Method (VIM) within the context of Klein-Gordon problems with variable coefficients. Consider
\begin{equation}
    \begin{cases}
    \psi_{rr}-\psi_{tt}+V(r)\psi=0\;\;\text{in} \;\;\; [0,\infty)\times \mathbb{R},\\
     \psi(0,t)=g(t),\\
      \psi_r(0,t)=h(t).
    \end{cases}\label{pde-with-V(r)}
  \end{equation}
We call ~\eqref{pde-with-V(r)} an initial value problem in the sense that the space coordinate $r$ plays the role of the time coordinate $t$ in a more standard initial value problem. Problems of this sort arise in the study of quantum fields in curved spacetimes \cite{Birrell-Davies, mythesis}.

In the following sections we reformulate the VIM for problem ~\eqref{pde-with-V(r)} leading to the iteration equation
\begin{equation}
\psi_{n+1}(r,t)=\psi_n(r,t)+\int_{0}^{r}\lambda(r,s)\left(\psi_{n,ss}(s,t)-\psi_{n,tt}(s,t)+V(s)\psi_n (s,t)\right )ds,
\label{IterationEquation}
\end{equation}
where $\lambda$ is an appropriate {\em Lagrange multiplier}. We then study this iteration procedure for the particular case where $V(r)=r$. It turns out that
\[
\lambda(r,s)=\sum_{k=0}^{\infty}\alpha_k (s-r)^k,
\]
is an Airy function whose coefficients satisfy a standard recursion relation. We pose the question as to whether the VIM produces a sequence that converges uniformly to the known solution on closed and bounded intervals. Moreover we show that the series converges when we use either of the first two partial sums of $\lambda$ rather than the full series . We provide numerical evidence that the convergence is faster when the larger partial sum is used, as would be expected.

The literature documenting the effectiveness of VIM on a wide variety of examples has been growing rapidly over the last twenty years. Methods of this type actually appeared at least as far back as \cite{boulware}. In articles such as \cite{hej} and \cite{hejj} the VIM was introduced in its current form. Its application to wave equations and Klein-Gordon equations has been explored in articles such as \cite{hejj}, \cite{hartle-hawking}, \cite{tatari}, and \cite{kasumo}. Most of the literature presents interesting and compelling examples. Convergence proofs are rare and often simply assume that the iteration method is associated with a contraction mapping. 

In sections \ref{sec:vim}, the formulation for the VIM is discussed and an iteration formula for the approximate solutions to particular type of initial value problems is presented. In section  \ref{airyequation}, the VIM formulation is applied to a particular initial value problem which has solutions in terms of Airy function. The approximate solutions of such problem produced by the VIM is discussed. In section \ref{airyconvergence}, we prove the convergence of the sequence of the approximate solutions to the exact solution of the problem.

\section{Variational Iteration Method}
\label{sec:vim}

In this section we provide a formal derivation of the variational iteration method (VIM) for the problem ~\eqref{pde-with-V(r)}. The first steps follow a familiar idea of creating an iteration scheme from an initial value problem, but our approach also differs from the standard used in much of the VIM literature. We then introduce $\lambda$ which is usually called the general Lagrange multiplier. Once $\lambda$ is determined the iteration formula ~\eqref{IterationEquation} can be defined precisely.

\subsection{One step of the iteration}

Consider the initial value problem ~\eqref{pde-with-V(r)}. Define
$L\psi:=\psi_{rr}+V(r)\psi$. Using this notation, the differential equation in ~\eqref{pde-with-V(r)} takes the form
\bea
L\psi(r,t)-\psi_{tt}(r,t)=0\label{pde-with-V(r)-2nd-version}.
\eea
To set up one iteration step we choose a function (an initial guess) $\psi_0(r,t)$ satisfying the auxiliary conditions, i.e., $\psi_0(0,t)=g(t)$ and $\psi_t(0,t)=h(t)$ and look for a correction function $\phi$ such that $\psi_1=\psi_0+\phi$ satisfies
\begin{equation}
    \begin{cases}
    L\psi_1(r,t)-\psi_{0tt}(r,t)=0\;\;\text{in} \;\;\; [0,\infty)\times \mathbb{R},\\
     \psi_1(0,t)=g(t),\\
      \psi_{1r}(0,t)=h(t).
    \end{cases}\label{pde-with-V(r)-3nd-version}
  \end{equation}
It follows that finding $\psi_1(r,t)$ can be reduced to finding $\phi(r,t)$ such that
\bea
L\phi(r,t)+L\psi_0(r,t)-\psi_{0tt}(r,t)=0\label{phi}
\eea
with $\phi(0,t)=0$ and $\phi_r(0,t)=0$.

Let $L_0^{-1}$ represent the solution operator for
\begin{equation}
    \begin{cases}
    Lu(r,t)+f(r,t)=0\;\;\text{in} \;\;\; [0,\infty)\times \mathbb{R},\\
     u(0,t)=0,\\
      u_r(0,t)=0.
    \end{cases}\label{solution-operator}
  \end{equation} 
where $f$ is assumed to be a continuous function. In other words, $u=-L^{-1}_0f$ satisfies ~\eqref{solution-operator}, and $L^{-1}_0$ is the inverse of $L$ subject to the homogeneous auxiliary conditions.  Substituting $u=\phi$ and $f=L\psi_0(r,t)-\psi_{0tt}(r,t)$ in ~\eqref{solution-operator} we get
\bea
\phi=-L^{-1}_0 (L\psi_0(r,t)-\psi_{0tt}(r,t)).\label{phi2}
\eea
adding this correction function to the initial guess provides the first new iterate
\[
\psi_1=\psi_0+\phi=\psi_0-L^{-1}_0 (L\psi_0(r,t)-\psi_{0tt}(r,t)).
\]

\subsection{\textbf{Introducing $\lambda$ and a formula for $L^{-1}_0 $}}

Consider
\bea
L^{-1}_0 f=-\int_0^r \lambda(s,r)f(s)ds.\label{proposedlambda}
\eea
where $\lambda$ is to be determined. Supposing that $w(r,t)$ is any test function\footnote{This is often called "variation" in the context of the variational iteration method.} satisfying the homogeneous auxiliary conditions, i.e., $w(0,t)=0$ and $w_r(0,t)=0$. This means $w=L^{-1}_0 L w$. Hence
\bea
w&=&-\int_0^r \lambda(s,r)Lw(s,t)ds\nonumber \\
&=& -\int_0^r \lambda(s,r)w_{ss}(s,t)ds -\int_0^r \lambda(s,r)V(s)w(s,t)ds.\label{w1}
\eea
One can take the first integral after the second equality in  ~\eqref{w1} and apply integration by parts on it twice to find
\bea
-\int_0^r \lambda(s,r)w_{ss}(s,t)ds&=& \Big[\lambda(s,r)w_s(s,t)\Big]\Big|_{s=0}^{s=r}-\Big[\lambda_s(s,r)w(s,t)\Big]\Big|_{s=0}^{s=r}\nonumber \\ &&
+\int_0^r \lambda_{ss}(s,r)w(s,t)ds\nonumber \\
&=&\lambda(r,r)w_r(r,t)-\lambda_r(r,r)w(r,t)\nonumber \\&&
+\int_0^r \lambda_{ss}(s,r)w(s,t)ds.
.\label{w2}
\eea
Substituting ~\eqref{w2} into ~\eqref{w1} gives
\bea
0&=&-\lambda(r,r)w_r(r,t)+(\lambda_s(r,r)-1)w(r,t)\nonumber \\&&
-\int_0^r \Big(\lambda_{ss}(s,r)+V(s)\lambda(s,r)\Big)w(s,t)ds\label{w-integral}
\eea

Using judicious choices of test functions we derive the following initial value problem for $\lambda(s,r)$
\bes \bea
\lambda_{ss}(s,r)+V(s)\lambda(s,r)&=&0,\\
\lambda_s(r,r)&=&1,\\
\lambda (r,r)&=&0
\eea \label{lambda1} \ees

Solving for $\lambda$ is now standard. In much of the literature we have $V\equiv 1$ and so $\lambda(s,r)=s-r$. If $V(r)$ is analytic, then $\lambda(s,r)$ can be found by standard series solution methods, but is now more difficult to compute with.
\subsection{The VIM Formula}
The method discussed in the previous section can be applied to differential equations of the form
\begin{equation}
L[u]+N[u]=0,\label{LNg}
\end{equation}
where $L$ is some linear differential operator, $N[u]$ is possibly a non-linear differential operator. In the case discussed in the previous section $L=\partial_{rr}+V(r)$ and $N=-\partial_{tt}$. Given ~\eqref{phi} and ~\eqref{proposedlambda},  we find a formula for $\phi=\psi_1-\psi_0$
\bea
\phi=\psi_1-\psi_0=\int_0^r\lambda(s,r) (\partial_{ss}\psi_0+V(s)\psi_0-\partial_{tt}\psi_0)ds.
\eea
In general, one can write
\bea
\psi_{n+1}=\psi_n+\int_0^r\lambda(s,r) (\partial_{ss}\psi_n+V(s)\psi_n-\partial_{tt}\psi_n)ds,\label{iteration-new}
\eea
where $\psi_n$ and $\psi_{n+1}$ are the $n$th and $n+1$th iterations starting from $\psi_0$ respectively. For a differential equation of the general form $Lu+Nu=0$, the iteration scheme then will be
\bea
\psi_{n+1}=\psi_n+\int_0^r\lambda(s,r) (L\psi_n+N\psi_n)ds.
\eea
In the next chapters, we show that the iteration scheme proposed in ~\eqref{iteration-new} generates a sequence $\psi_n$ that converges to the solution of the desired differential equation in the limit $n\to \infty$.

\section{Airy Equation}
\label{airyequation}
We consider a particular type of initial value problem of form~\eqref{pde-with-V(r)} in which the potential term is linear; i.e., $V(r)=r$
\begin{equation}
    \begin{cases}
    \psi_{rr}(r,t)-\psi_{tt}(r,t)+r\psi(r,t)=0\;\;\;\;\text{in} \;[0,\infty)\times\mathbb{R},\\
     \psi(0,t)=e^{it},\\
       \psi_r(0,t)=0.
    \end{cases} \label{Airy3n}
  \end{equation}
The the solutions to ~\eqref{Airy3n} can be expressed in terms of Airy functions ~\cite{nagle}. Here, we first find the solutions to ~\eqref{Airy3n} using the standard method of solving partial differential equations and then apply the variational iteration method to this problem. Our main goal  is to test the convergence of the solutions constructed by the VIM method to the exact solutions of ~\eqref{Airy3n}. We first assume that ~\eqref{Airy3n} has a power series solution of form
\begin{equation}
\psi(r,t)= e^{it}\underset{k=0}{\overset{\infty}{\sum}}\alpha_n r^n.\label{Airy2-series}
\end{equation}
By substitution of ~\eqref{Airy2-series} into ~\eqref{Airy3n} and imposing the initial conditions, we find the following recursion relation for $\alpha_n$
\begin{equation}
\alpha_{n+2}=-\frac{\alpha_n+\alpha_{n-1}}{(n+1)(n+2)},\label{Airy2-recursion-exact}
\end{equation}
with $\alpha_0=1$, $\alpha_1=0$, and $\alpha_2=-\frac{1}{2}$. Note that ~\eqref{Airy2-recursion-exact} is the well-known recursion relation for the coefficients of the power series of an Airy function \cite{nagle}.\\
\subsection{VIM Applied to Airy Equation}
In order to apply the variational iteration method to ~\eqref{Airy3n}, we first have to determine $\lambda$. Given ~\eqref{lambda1}, we find the following system of differential equations for $\lambda(s,r)$,
\bes \bea
\lambda_{ss}(s,r)+s\lambda(s,r)&=&0,\\
\lambda_s(r,r)&=&1,\\
\lambda (r,r)&=&0
\eea \label{lambdastar} \ees
that are true for all $r, s \in [0,\infty)$. We let
\bea
\lambda(s,r)=\underset{n=0}{\overset{\infty}{\sum}}a_n(s-r)^n\label{lambda3}
\eea
with $a_0=0$, $a_1=1$, and $a_2=0$. By substitution of ~\eqref{lambda3} into ~\eqref{Airy3n}, we derive the following recurrence relation for $a_n$ for $n\geq 2$
\bea
a_{n+2}=-\frac{a_{n-1}+ra_n}{(n+2)(n+1)}.\label{recurrence1}
\eea
It turns out that $\lambda(s,r)$ is an Airy function itself having the form
\bea
\lambda(s,r)=(s-r)-\frac{r}{6}(s-r)^3+\dots.\label{lambda-airy}
\eea
In the next two sections, we apply VIM to ~\eqref{Airy3n} using the first two partial sums of $\lambda(s,r)$, i.e., $\lambda(s,r)=s-r$ and $\lambda(s,r)=(s-r)-\frac{r}{6}(s-r)^3$.  Later in the following section, we prove
 that  that in both cases, VIM generates a sequence of approximate solutions which converges to the exact solution of ~\eqref{Airy3n}
\subsection{Case1:\;$\lambda=s-r$}
In this section, we take only the first term in ~\eqref{lambda-airy} and construct an iteration scheme as discussed in section ~\eqref{sec:vim}
We assume that  $\psi_n(r,t)$ have the following form
\bea
\psi_n(r,t) =e^{it}\underset{k=0}{\overset{m}{\sum}}a_k r^k,\label{psi-iteration}
\eea
where $n$ is a positive integer and represents the number of iterations that have been made from the initial guess $\psi_0(r,t)$ and $m$ is the number of terms in the $n$th iteration. 
For simplicity in the calculations, we can isolate the first two terms of the power series in ~\eqref{psi-iteration} as
\bea
\psi_n(r,t) =e^{it}\Big[a_0+a_1 r+\underset{k=2}{\overset{m}{\sum}}a_k r^k\Big].\label{psi-iterationnew}
\eea
 Substituting $\lambda(s,r)=s-r$ and ~\eqref{psi-iterationnew} into the ~\eqref{iteration-new}, Letting $V(r)=r$, and doing some algebra, we find 

\bea
\psi_{n+1}&=&e^{it}\Big[a_0+a_1 r -\frac{a_0}{2}r^2\underset{k=0}{\overset{m-2}{\sum}}a_{k+2} r^{k+2}-\underset{k=1}{\overset{m}{\sum}}\frac{a_k+a_{k-1}}{(k+2)(k+1)} r^{k+2}\nonumber \\&&
-\frac{a_m}{(m+3)(m+2)}r^{m+3}\Big]. \label{psinone}
\eea
One can observe that the first three terms and the first two sums inside the bracket in ~\eqref{psinone} have coefficients that are similar to those of the Airy function but the last term $\frac{a_m}{(m+3)(m+2)}r^{m+3}$ does not give the coefficients of the Airy function. As we see in the next section, keeping the first two terms in the series expansion for $\lambda(s,r)$ gives the same type of result. However, we will see in the next section that the sum of the terms with non-Airy coefficients accumulated in each iteration finally converges to zero as $n\to \infty$.  We skip the proof of the convergence of the solutions to the Airy function for the current case, i.e., $\lambda(s,r)=s-r$ since it is similar to the case we discuss in the next section.  \\
\subsection{Case2: $\lambda=(s-r)-\frac{r}{6}(s-r)^3$}
We next construct the iteration formula for the sequence of approximate solutions for ~\eqref{Airy3n} by letting $\lambda(s,r)$ be the second partial sum in ~\eqref{lambda-airy}, i.e.,
\bea
\lambda(s,r)=(s-r)-\frac{r}{6}(s-r)^3.\label{lambda2}
\eea
 By substitution of ~\eqref{lambda2} into ~\eqref{iteration-new} and letting $V(r)=r$, after performing the integration by-parts twice, we find
\bea
\psi_{n+1}&=&1-\frac{r^3}{2}+\int_0^r (s-r)^2\psi_n(s,t)-\int_0^r\frac{rs}{6}(s-r)^3\psi_n(s,t)ds\nonumber \\&&
-\int_0^r\lambda(s,r)\psi_{ntt}(s,r)ds.\label{it-lambda2}
\eea
We assume that $\psi_n$, the $n$-th iteration from the initial guess $\psi_0$ and satisfying ~\eqref{it-lambda2}, is of form
\bea
\psi_n
(r,t)=e^{it}\underset{k=0}{\overset{N}{\sum}}a_{k}^nr^k\label{psinguess}.
\eea
with $N\geq 1$. Substituting ~\eqref{psinguess} into ~\eqref{it-lambda2}, we find the iteration relation for $a_n$ 
\bea
a^{n+1}_{k}&=&\frac{a_{k-6}^n}{(k-1)(k-2)(k-3)(k-4)}+\frac{a_{k-5}^n}{(k-1)(k-2)(k-3)(k-4)}\nonumber \\
&&+\frac{2a_{k-3}^n}{k(k-1)(k-2)}-\frac{a_{k-2}^n}{k(k-1)}\label{AirynonAiry}
\eea
for $k\geq 6$. Begining  with $\psi_0(r,t)=e^{it}$ and by direct computation, we find $a_k^0=1$ for $k\geq 1$, $a_0^n=1$, $a_1^n=0$, $a_2^n=-\frac{1}{2}$,  $a_3^n=-\frac{1}{6}$, $a_4^n=\frac{1}{24}$, $a_5^n=\frac{1}{30}$, and $a_6^n=\frac{1}{240}$ for $n\geq 3$. Note that these first coefficients in the power series ~\eqref{psinguess} can be obtained by recursion relation in ~\eqref{Airy2-recursion-exact}. Hence, they represent the first six coefficients of the power series for the Airy coefficient. It is worth writing the coefficients of $\psi_n(r,t)$ for the first few iterations. Since we began with initial guess $\psi_0(r)=e^{it}$, therefore,
\bea
a_0^0=1, \quad a_1^0=0, \quad a_2^0=0, \quad \text{etc}.\label{coefficients1}
\eea
Given ~\eqref{coefficients1} and the iteration formula in ~\eqref{it-lambda2}, one can then obtain the non-zero coefficients of the power of $r$ for the first iterations. For $\psi_1(r,t)$, we have
\[\quad \quad \quad
a_0^1=1, \quad a_1^1=0, \quad a_2^1=-\frac{1}{2}, \quad a_3^1=-\frac{1}{6}, \quad a_4^1=0, \quad a_5^1=\frac{1}{24}, \quad a_6^1=\frac{1}{120}.\]
and for $\psi_2(r,t)$, we find
\[\quad \quad \quad
a_0^2=1,\;a_1^2=0, \;a_2^2=-\frac{1}{2}, \;a_3^2=-\frac{1}{6}, \;a_4^1=\frac{1}{24}, \;a_5^2=\frac{1}{30}, \;a_6^2=\frac{1}{180}, \; a_7^1=-\frac{1}{420},\]
\[a_8^2=-\frac{1}{1440}, \;a_9^2=-\frac{1}{15120}, \;a_{10}^1=\frac{1}{72576}, \; a_{11}^2=\frac{1}{100800}, \; a_{12}^2=\frac{1}{950400}.
\]\\
One can see in $\psi_1(r,t)$, $a_0$, $a_1$, $a_2$, and $a_3$ can be obtained by the recursion relation ~\eqref{Airy2-recursion-exact}. Hence, they give the exact terms in the series expansion of the exact solutions to ~\eqref{Airy3n}. However, the coefficeints  $a_4$, $a_5$, and $a_6$ gives some terms that are not present in the series expansion of the exact solution to ~\eqref{Airy3n}. Similarly, for $\psi_2(r,t)$, the coefficients $a_0$, $a_1$, $a_2$, $a_3$, $a_4$, and  $a_5$ are the Airy coefficients but $a_6$, $a_7$, $a_8$, $a_9$, $a_{10}$, $a_{11}$, and $a_{12}$ give some non-Airy coefficients. As we see in the next section, after each iteration, the number of terms with Airy coefficients increases by two and the terms with non-Airy coefficients increase by six.

Before we proceed to the next section, we make a remark about the rate of the convergence of $\psi_n(r,t)$ to the exact solution of ~\eqref{Airy3n}. Between the two Lagrange multipliers discussed in this section, one can expect that $\lambda=(s-r)-\frac{r}{6}(s-r)^3$ give rise to a sequence $\psi_n(r,t)$  that converges to the exact solutions of ~\eqref{Airy3n} faster compared to the case where $\lambda=s-r$, ( Fig.~\ref{fig:convergence}). \\ 
In next section, we prove that the sum of terms with non-Airy coefficients in $\psi_n$ converges to zero in the limit $n\to \infty$.
\begin{figure}[h]
\includegraphics [trim=0cm 0cm 0cm 0cm,clip=true,totalheight=0.20\textheight]{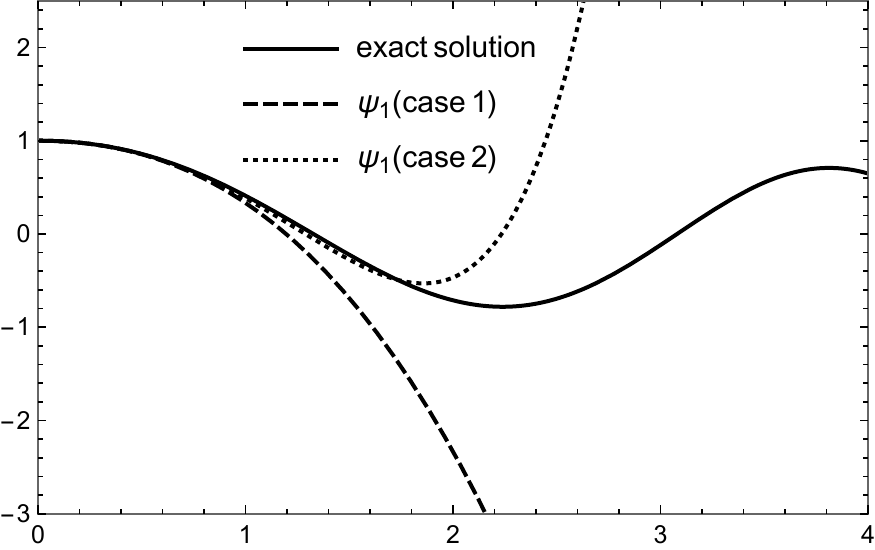}
\includegraphics [trim=0cm 0cm 0cm 0cm,clip=true,totalheight=0.20\textheight]{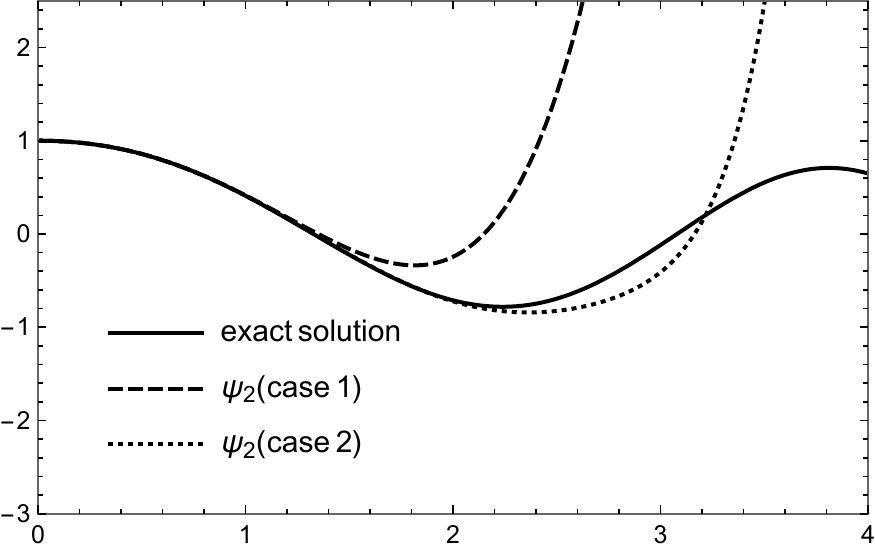}
\includegraphics [trim=0cm 0cm 0cm 0cm,clip=true,totalheight=0.20\textheight]{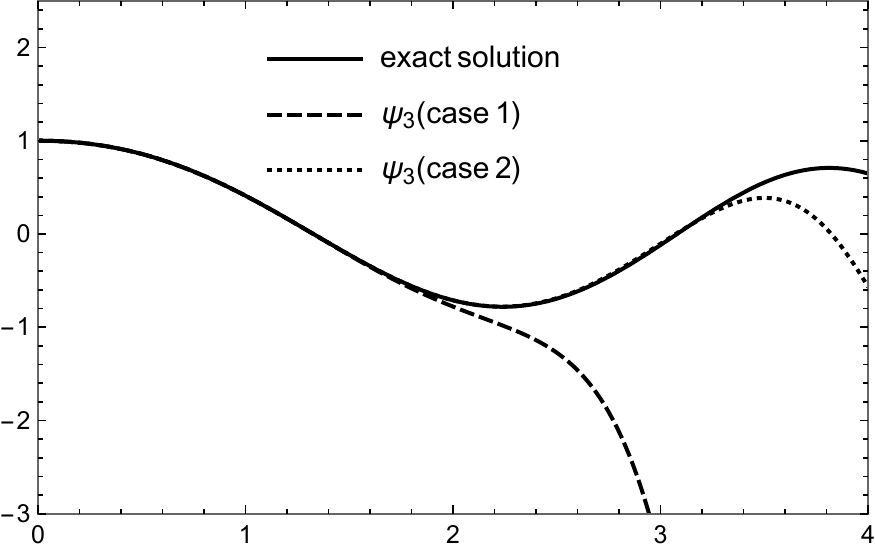}
\includegraphics [trim=0cm 0cm 0cm 0cm,clip=true,totalheight=0.20\textheight]{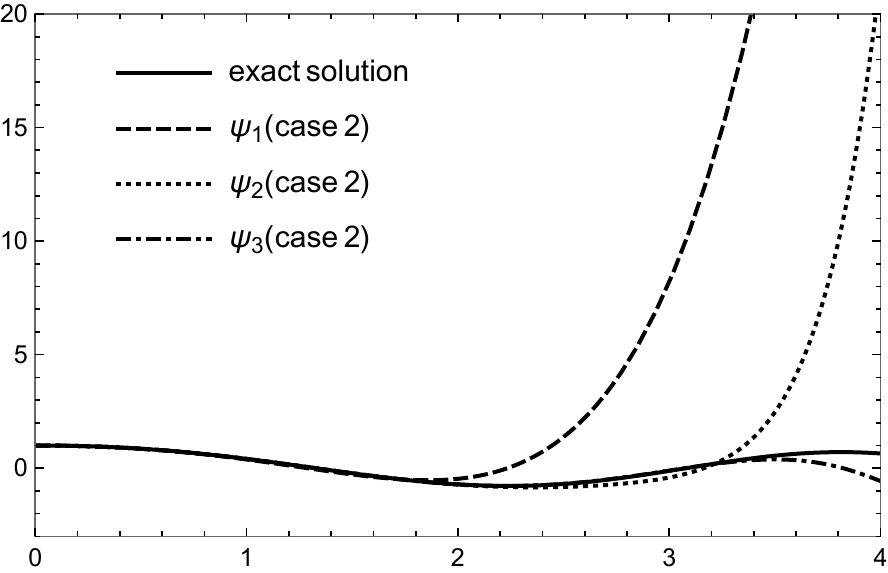}
\centering
\caption{ Figures top-left, top-right, bottom-left show the first, second and third iterations from the initial guess $\psi_0(r,t)=e^{it}$ respectively. One can see the the sequence $\psi_n$ found by the second case; i.e., $\lambda=(s-r)-\frac{r}{6}(s-r)^3$ is convergeting to the exact solution faster than the  sequence $\psi_n$ found by the first case; i.e., $\lambda=(s-r)$. The figure bottom-right depicts the exact solutions to ~\eqref{Airy3n} and the first three iterations $\psi_1$, $\psi_2$, and $\psi_3$ found in the second case.}
\label{fig:convergence}
\end{figure} 
\\
\section{Convergence of Solutions Constructed by VIM}
\label{airyconvergence}
In this section, we prove that  the sequence $\psi_n$ constructed by the iteration formula ~\eqref{it-lambda2} and the Lagrange multiplier ~\eqref{lambda2}, converges to the solutions of ~\eqref{Airy3n}. We particularly prove that the sum of the terms with non-Airy coefficients in ~\eqref{AirynonAiry} that appear in each iteration, converges to zero as the number of iterations goes to infinity.

\begin{lemma}
If  $a_{k}^n$ are the Airy coefficients for $k=0, \dots, m$, then $a_k^{n+1}$ are Airy coefficients for $k=0, \dots, m+2$.
\end{lemma}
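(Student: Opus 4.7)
The plan is to verify the lemma by direct substitution into the coefficient recurrence ~\eqref{AirynonAiry}, with the Airy recursion ~\eqref{Airy2-recursion-exact} serving as the one nontrivial simplification. The hypothesis says that $a_j^n$ equals the Airy coefficient $\alpha_j$ for every $j \le m$, and I want to conclude that $a_k^{n+1} = \alpha_k$ for every $k \le m+2$.

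First I would dispose of the small indices $k \in \{0,1,2,3,4,5\}$. For these the formula ~\eqref{AirynonAiry} does not apply, but the explicit computation in the paragraph preceding the lemma already shows $a_0^{n+1}=1$, $a_1^{n+1}=0$, $a_2^{n+1}=-1/2$, and so on, matching the first six Airy coefficients. So the only substantive range to check is $6 \le k \le m+2$.

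In this range every index appearing on the right-hand side of ~\eqref{AirynonAiry}, namely $k-6$, $k-5$, $k-3$, and $k-2$, is at most $k-2 \le m$, so the hypothesis lets me replace each $a_j^n$ by $\alpha_j$. The task then reduces to the purely algebraic identity
\begin{equation*}
\alpha_k = \frac{\alpha_{k-6}+\alpha_{k-5}}{(k-1)(k-2)(k-3)(k-4)} + \frac{2\alpha_{k-3}}{k(k-1)(k-2)} - \frac{\alpha_{k-2}}{k(k-1)}.
\end{equation*}
I would verify this by applying ~\eqref{Airy2-recursion-exact} at $\alpha_{k-3}$ in the form $\alpha_{k-6}+\alpha_{k-5}=-(k-3)(k-4)\alpha_{k-3}$, which collapses the four-factor term to $-\alpha_{k-3}/[(k-1)(k-2)]$. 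Combining this with the $+2\alpha_{k-3}/[k(k-1)(k-2)]$ term yields $-\alpha_{k-3}/[k(k-1)]$, and adding the remaining $-\alpha_{k-2}/[k(k-1)]$ produces $-(\alpha_{k-2}+\alpha_{k-3})/[k(k-1)]$, which equals $\alpha_k$ by a second application of the Airy recursion.

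The only real obstacle is recognizing that the three distinct denominators on the right of ~\eqref{AirynonAiry} are not accidental but encode two hidden uses of the Airy recursion within a single VIM step: one to eliminate $\alpha_{k-5}$ and $\alpha_{k-6}$, and one to read off $\alpha_k$ itself. This double use is precisely why each iteration extends the block of correct Airy coefficients by exactly two, as the statement of the lemma records.
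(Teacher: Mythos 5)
Your proposal is correct and follows essentially the same route as the paper: apply the Airy recursion in the form $\alpha_{k-6}+\alpha_{k-5}=-(k-3)(k-4)\alpha_{k-3}$ to collapse the four-factor term in ~\eqref{AirynonAiry}, combine with the $2\alpha_{k-3}/[k(k-1)(k-2)]$ term, and recognize the result $-(\alpha_{k-3}+\alpha_{k-2})/[k(k-1)]$ as the Airy recursion for $\alpha_k$. Your added attention to the small indices $k\le 5$ and to why $k\le m+2$ keeps all right-hand indices within the hypothesis is a welcome tightening of details the paper leaves implicit, but the argument is the same.
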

\renewcommand\qedsymbol{$\blacksquare$}
\begin{proof}
Since $a_k^n$ is an Airy coefficients, because of ~\eqref{Airy2-recursion-exact}, we have
\bea
a_{k-3}^n=-\frac{a_{k-6}^n+a_{k-5}^n}{(k-3)(k-4)}.
\eea
Therefore,  ~\eqref{AirynonAiry} gives
\bea
a^{n+1}_{k}=-\frac{a_{k-3}^n}{(k-1)(k-2)}+
\frac{2a_{k-3}^n}{k(k-1)(k-2)}-\frac{a_{k-2}^n}{k(k-1)},\label{AirynonAiry3}
\eea
which simplifies to 
\bea
a^{n+1}_{k}=
-\frac{a_{k-3}^n+a_{k-2}^n
}{k(k-1)},\label{AirynonAiry4}
\eea
which is the desired result.
\end{proof}
\renewcommand\qedsymbol{$\blacksquare$}
\begin{lemma}
If $a_{k}^n=0$ for $k>m$, then $a_k^{n+1}=0$ for $k>m+6$. 
\end{lemma}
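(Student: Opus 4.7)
The plan is a direct index-chase using the recurrence relation \eqref{AirynonAiry}. Fix $k > m+6$; note this forces $k \geq 7$, so the recurrence applies. The formula for $a_k^{n+1}$ has four nonzero-looking numerator contributions, coming from the coefficients $a_{k-6}^n$, $a_{k-5}^n$, $a_{k-3}^n$, and $a_{k-2}^n$. I would verify that the hypothesis $a_j^n = 0$ for $j > m$ kills each of these four terms, after which $a_k^{n+1} = 0$ is immediate.

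The key observation is that $k > m+6$ implies simultaneously
\[
k-6 > m, \quad k-5 > m, \quad k-3 > m, \quad k-2 > m,
\]
so all four indices appearing on the right-hand side of \eqref{AirynonAiry} exceed $m$. By hypothesis each of $a_{k-6}^n$, $a_{k-5}^n$, $a_{k-3}^n$, $a_{k-2}^n$ vanishes, and substituting into \eqref{AirynonAiry} yields $a_k^{n+1} = 0$. The constant $6$ in the conclusion is precisely the largest index-shift on the right-hand side of the recurrence, which is why the ``non-Airy tail'' grows by exactly six terms per iteration, consistent with the example coefficients computed for $\psi_1$ and $\psi_2$ in the previous subsection.

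There is no real obstacle here; the only thing to take care with is the boundary case. The recurrence \eqref{AirynonAiry} is stated only for $k \geq 6$, and one should briefly observe that $k > m+6$ together with $m \geq 0$ ensures $k \geq 7 > 6$, so the recurrence is indeed applicable on the entire range of $k$ in question. With that point noted, the proof reduces to reading off the indices in \eqref{AirynonAiry} and invoking the hypothesis.
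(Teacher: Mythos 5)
Your proof is correct and is exactly the index-chase that the paper's one-line proof (``follows directly from \eqref{AirynonAiry}'') leaves implicit: for $k>m+6$ every index $k-6,k-5,k-3,k-2$ on the right-hand side exceeds $m$, so all four terms vanish. Your additional check that $k>m+6$ keeps you in the range $k\geq 6$ where the recurrence is valid is a sensible bit of care the paper omits.
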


\begin{proof}
The desired result follows directly from ~\eqref{AirynonAiry}.
\end{proof}

\renewcommand\qedsymbol{$\blacksquare$}
\begin{lemma}
$|a_{k}^n|\leq 1$ for all $n$ and $k$.
\end{lemma}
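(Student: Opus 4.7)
The plan is to prove the bound by induction on $n$, using the recurrence (\ref{AirynonAiry}) for the inductive step. The base case $n=0$ is immediate: since $\psi_0(r,t)=e^{it}$, we have $a_0^0=1$ and $a_k^0=0$ for $k\geq 1$, so $|a_k^0|\leq 1$ for every $k$.

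For the inductive step, assume $|a_j^n|\leq 1$ for every $j\geq 0$ and aim to show the same for $a_k^{n+1}$. The natural split is by the size of $k$: the recurrence (\ref{AirynonAiry}) is valid only for $k\geq 6$, so for $k\leq 5$ I would simply invoke the explicit values already tabulated in the preceding subsection (together with Lemma 2.1, which guarantees these become Airy coefficients for sufficiently many iterations). In each case one checks by inspection that the finitely many coefficients $a_0^m,\dots,a_5^m$ for all $m\geq 0$ lie in $[-1,1]$; since the list is finite and explicit, this is a routine verification.

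The substantive case is $k\geq 6$. Applying the triangle inequality to (\ref{AirynonAiry}) and the inductive hypothesis gives
\begin{equation*}
|a_k^{n+1}|\leq \frac{2}{(k-1)(k-2)(k-3)(k-4)}+\frac{2}{k(k-1)(k-2)}+\frac{1}{k(k-1)}.
\end{equation*}
Each denominator is monotonically increasing in $k$, so the worst case is $k=6$, where the three denominators are $120$, $120$, and $30$. This gives
\begin{equation*}
|a_k^{n+1}|\leq \tfrac{1}{60}+\tfrac{1}{60}+\tfrac{1}{30}=\tfrac{1}{15}\leq 1,
\end{equation*}
closing the induction. In fact the bound $1/15$ is much better than needed, which is reassuring for the eventual convergence argument.

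The main subtlety, rather than an obstacle, is bookkeeping for $k\leq 5$: the recurrence (\ref{AirynonAiry}) breaks down there because the indices $k-6,k-5,k-3,k-2$ cease to make sense or the denominators vanish. I would handle this by appealing directly to the displayed values of $a_k^m$ given in the text for $m=0,1,2$, and then using Lemma 2.1 to see that for $m\geq 3$ the coefficients $a_0^m,\dots,a_5^m$ agree with the first six Airy coefficients $1,0,-\tfrac{1}{2},-\tfrac{1}{6},\tfrac{1}{24},\tfrac{1}{30}$, each of absolute value at most $1$.
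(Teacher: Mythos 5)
Your proof is correct and follows essentially the same route as the paper's: induction on $n$, with the small indices $k\leq 5$ handled by the explicit tabulated values and the case $k\geq 6$ by applying the triangle inequality to the recurrence \eqref{AirynonAiry}, where the worst case $k=6$ yields exactly the bound $\tfrac{2}{5!}+\tfrac{2}{6\cdot5\cdot4}+\tfrac{1}{6\cdot5}=\tfrac{1}{15}\leq 1$ that appears in the paper. Your write-up is in fact slightly more careful about the base case and the breakdown of the recurrence for $k\leq 5$, but the argument is the same.
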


\begin{proof}
This is clearly true for all $a_k^0$ and for all $a_k^n$ with $0\leq k\leq 6$ and all $n$. Assume $|a_k^N|\leq 1$ for all $k$. Then for $k\geq 6$, ~\eqref{AirynonAiry} gives 
\bea
|a_{k}^{N+1}|\leq \frac{2}{5!}+\frac{2}{6.5.4}+\frac{1}{6.5}\leq 1.
\eea
Hence, by induction, $|a_k^N|\leq 1$.
\end{proof}

Before proceeding to the next lemma, we shall provide a definition. Operation $\underset{\cdot}{\nmid}$ is defined as follows
\bes \bea
6\underset{\cdot}{\nmid}&=&5\cdot4\cdot3,\\
9\underset{\cdot}{\nmid}&=&8\cdot7\cdot6,\\
k\underset{\cdot}{\nmid}&=&(k-1)(k-2)(k-3)(k-6)\underset{\cdot}{\nmid}\quad \text{for} \; n\geq 4.
\eea \ees
\begin{lemma}
$|a_{k}^n|\leq \frac{2^m}{k\underset{\cdot}{\nmid}}$ for $k=6m+a$ with $a\in\{0,\;\dots\;,5\}$ and $m\in\mathbb{N}$.
\end{lemma}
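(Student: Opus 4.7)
The plan is to prove the bound by strong induction on $k$, uniformly in $n$, with the recurrence \eqref{AirynonAiry} doing the essential work. The definition of $k\underset{\cdot}{\nmid}$ has been engineered precisely so that its recursion $k\underset{\cdot}{\nmid}=(k-1)(k-2)(k-3)(k-6)\underset{\cdot}{\nmid}$ matches the factor that arises when one applies the dominant contribution $\frac{a^n_{k-6}}{(k-1)(k-2)(k-3)(k-4)}$ in \eqref{AirynonAiry}; this compatibility is what I would exploit to close the induction.

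For the base of the induction, first note that $a_k^0=0$ for every $k\geq 1$, so the bound holds trivially at the zeroth iterate. For $k\leq 5$ (i.e.\ $m=0$) I interpret $k\underset{\cdot}{\nmid}=1$, in which case the statement reduces to Lemma 3. For the anchor values $k=6,7,\dots,11$ (so $m=1$) I would verify the bound directly from \eqref{AirynonAiry} together with Lemma 3, since in this range all four auxiliary indices $k-2,k-3,k-5,k-6$ fall in the region where $|a_{k'}^n|\leq 1$ is already known.

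For the inductive step, assume the claim for all $k'<k$ and all $n$, and write $k=6m+a$ with $a\in\{0,\dots,5\}$. Applying the triangle inequality to \eqref{AirynonAiry} gives
\begin{align*}
|a_k^{n+1}|\leq\ &\frac{|a^n_{k-6}|+|a^n_{k-5}|}{(k-1)(k-2)(k-3)(k-4)}\\
&\ +\frac{2|a^n_{k-3}|}{k(k-1)(k-2)}+\frac{|a^n_{k-2}|}{k(k-1)}.
\end{align*}
To each factor $|a^n_{k-j}|$ with $j\in\{2,3,5,6\}$ I apply the inductive bound, observing that the level $m_j$ of the index $k-j$ equals $m-1$ whenever $j>a$ and equals $m$ otherwise. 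The dominant contribution comes from $a^n_{k-6}$, which is always at level $m-1$; using the recursion for $k\underset{\cdot}{\nmid}$, this term produces $\frac{2^{m-1}}{(k-4)\,k\underset{\cdot}{\nmid}}$. The remaining three contributions are controlled by comparable or smaller quantities because of the additional factors of $k$ and $(k-j)\underset{\cdot}{\nmid}$ in their denominators, and the factor-of-two slack between $2^{m-1}$ and $2^m$ is just enough to absorb them.

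The main obstacle is the algebraic verification that the four contributions sum to at most $\frac{2^m}{k\underset{\cdot}{\nmid}}$, particularly in the sub-case $a=5$, where three of the four auxiliary indices $k-2,k-3,k-5$ lie in the same level $m$ as $k$ rather than descending to level $m-1$. To handle this cleanly I would run a nested induction on $a\in\{0,\dots,5\}$ within each level $m$, using the already-established bound at the same level but smaller $k$ to close the loop. Once this technical verification is in hand, the inductive step is complete and the lemma follows.
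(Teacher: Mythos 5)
Your overall architecture differs from the paper's. The paper does not induct on $k$: it unfolds the recurrence \eqref{AirynonAiry} backwards in the iteration index $n$, first bounding $|a_k^{n+1}|$ by $\frac{1}{(k-1)(k-2)}\bigl(\frac{2}{(k-3)(k-4)}+1\bigr)\max\bigl\{|a_{k-2}^n|,\dots,|a_{k-6}^n|\bigr\}$, then repeating this $m$ times so that a factor of at most $\frac{2}{(\cdot)(\cdot)}$ accumulates at each stage, and finally killing the innermost max with the uniform bound $|a_k^n|\le 1$. Collapsing the four terms of the recurrence into a single max at every stage is precisely what lets the paper avoid the four-way comparison of $\underset{\cdot}{\nmid}$-denominators that your plan must confront. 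Your route (strong induction on $k$, uniform in $n$, with termwise application of the inductive hypothesis) is legitimate in principle and, if completed, would amount to tighter bookkeeping than the paper's; but it is not the paper's argument.

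As written, the proposal has two concrete gaps. First, the step you yourself flag as ``the main obstacle'' --- verifying that the four inductively bounded contributions sum to at most $\frac{2^m}{k\underset{\cdot}{\nmid}}$ --- is the entire content of the inductive step and is not carried out. It is not routine: the terms $a_{k-2}^n$ and $a_{k-3}^n$ carry denominators $(k-2)\underset{\cdot}{\nmid}$ and $(k-3)\underset{\cdot}{\nmid}$, which live on different residue chains mod $6$ from $k\underset{\cdot}{\nmid}$, and the paper anchors $\underset{\cdot}{\nmid}$ only at $k\equiv 0,3 \pmod 6$, so you must first fix a definition for the remaining residues before the comparison is even meaningful; spot checks (e.g.\ at $k=12$, where the $a_{k-2}$ term alone contributes nearly half the budget) show the margin is real but not generous. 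Second, your anchor step fails as stated: for $k=6$ the bound $|a_j^n|\le 1$ alone gives $|a_6^{n+1}|\le \frac{2}{120}+\frac{2}{120}+\frac{1}{30}=\frac{1}{15}$, which exceeds the target $\frac{2}{6\underset{\cdot}{\nmid}}=\frac{1}{30}$ (and similarly at $k=7$). The anchors $k=6,\dots,11$ must instead be checked against the explicitly computed low-order values $a_0^n=1$, $a_1^n=0$, $a_2^n=-\frac12$, $a_3^n=-\frac16$, $a_4^n=\frac1{24}$, $a_5^n=\frac1{30}$ from Section 3, not merely the crude bound.
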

\begin{proof}
For $k=6m+a$ with $a\in \{0, \dots, 5\}$\;and\; $\textit{m}\in \mathbb{N}$, using ~\eqref{AirynonAiry} we find
\bea
|a_{k}^{n+1}|\leq \frac{1}{(k-1)(k-2)}\Big(\frac{2}{(k-3)(k-4)}+1\Big)\max\Big\{|a_{k-2}^{n}|,\;\dots\;,|a_{k-6}^{n}|\Big\}.
\eea
Applying ~\eqref{AirynonAiry} to  the coefficients $\Big\{|a_{k-2}^n|,\;\dots\;,|a_{k-6}^n|\Big\}$ gives
\bea
|a_{k}^{n+1}|\leq \frac{2^2}{(k-1)(k-2)(k-7)(k-8)}\max\Big\{|a_{k-4}^{n-1}|,\;\dots\;,|a_{k-12}^{n-1}|\Big\}.
\eea
Finally, by applying  ~\eqref{AirynonAiry} to  the coefficients $\Big\{|a_{k-4}^{n-1}|,\;\dots\;,|a_{k-12}^{n-1}|\Big\}$ successively, we find
\bea
|a_{k}^{n+1}|\leq \frac{2^m}{(k-1)(k-2)(k-7)(k-8)\;\dots\;(a+1)a}\max\Big\{|a_{k-2m}^{n-m+1}|,\;\dots\;,|a_{a}^{n-m+1}|\Big\}.\label{inequality}
\eea
Then using Lemma 4.2, we have 
\bea
|a_{k}^{n+1}|\leq \frac{2^m}{k\underset{\cdot}{\nmid}}
\eea
\end{proof}
We next proceed to the main theorem of this section.
\begin{theorem}
Consider the initial value problem ~\eqref{Airy3n}. Let $\psi_0(r,t)=e^{it}$ and let $\psi_n(r,t)$ be the iteration determined by ~\eqref{it-lambda2}. Given an $R>0$, we have that $\psi_n(r,t)$ converges uniformly on $[-R,R]$ to the solution of ~\eqref{Airy3n}.
\end{theorem}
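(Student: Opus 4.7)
The plan is to compare the polynomial $\psi_n(r,t)$ term by term with the power series representation of the exact solution. Starting from $\psi_0(r,t)=e^{it}$, the first two coefficients $a_0^0=1$ and $a_1^0=0$ already agree with the Airy coefficients $\alpha_0,\alpha_1$, and an induction based on Lemma 4.1 then gives $a_k^n=\alpha_k$ for $k=0,\ldots,2n+1$. Meanwhile, from $a_k^0=0$ for $k\geq 1$, an induction using Lemma 4.2 gives $a_k^n=0$ for $k>6n$. Hence each $\psi_n$ is a polynomial in $r$ of degree at most $6n$ whose first $2n+2$ coefficients match those of the exact solution.

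Writing the exact solution as $\psi(r,t)=e^{it}\sum_{k=0}^{\infty}\alpha_k r^k$, which is entire in $r$ because the $\alpha_k$ satisfy the standard Airy recursion, the matching coefficients cancel. So for $|r|\leq R$ and all $t\in\mathbb{R}$,
\[
|\psi_n(r,t)-\psi(r,t)|\leq \sum_{k=2n+2}^{6n}|a_k^n|\,R^k+\sum_{k=2n+2}^{\infty}|\alpha_k|\,R^k.
\]
The second sum is the tail of a convergent power series and tends to $0$ as $n\to\infty$.

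For the first (non-Airy) sum, Lemma 4.4 gives $|a_k^n|\leq 2^m/(k\underset{\cdot}{\nmid})$ where $k=6m+a$, $a\in\{0,\ldots,5\}$. The recursion $(k+6)\underset{\cdot}{\nmid}=(k+5)(k+4)(k+3)\,k\underset{\cdot}{\nmid}$ shows that in the auxiliary majorant series $\sum_{k\geq 6}2^{\lfloor k/6\rfloor}R^k/(k\underset{\cdot}{\nmid})$ the ratio of the $(k+6)$-th term to the $k$-th term equals $2R^6/[(k+3)(k+4)(k+5)]\to 0$; by the ratio test this series converges for every $R>0$, so its tail beginning at $k=2n+2$ dominates the finite non-Airy sum and vanishes as $n\to\infty$. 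Combining the two estimates, and noting that $|e^{it}|=1$ makes everything uniform in $t$, gives the desired uniform convergence on $\{|r|\leq R\}\times\mathbb{R}$.

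The main technical obstacle is the control of the non-Airy tail: each iteration adds only two new correct Airy coefficients (Lemma 4.1) but introduces six new non-zero coefficients at the top of the polynomial (Lemma 4.2), so without the sharp decay bound of Lemma 4.4 there would be no reason for the newly introduced errors to be swamped by the advance of the tail's starting index. The ratio argument above is the one place where the factorial-like growth of $k\underset{\cdot}{\nmid}$ is used, and it is precisely what makes the bound $2^m/(k\underset{\cdot}{\nmid})$ small enough to dominate $R^k$ as $k\to\infty$.
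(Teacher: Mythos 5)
Your proof is correct and follows essentially the same route as the paper's: decompose $\psi_n$ into the leading block of Airy coefficients (which, by induction on Lemma 4.1, covers $k\le 2n+1$ and converges to the exact entire series) and the non-Airy remainder of degree at most $6n$, which is controlled by the bound of Lemma 4.4. Your write-up is in fact more complete than the paper's sketch, since you explicitly verify via the ratio test that the majorant series $\sum 2^{\lfloor k/6\rfloor}R^{k}/\bigl(k\underset{\cdot}{\nmid}\bigr)$ converges for every $R$, a step the paper asserts without justification.
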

\begin{proof}
Let $R>0$ be given. By Lemma 4.4, we have that $|a_{k}^{n+1}|\leq \frac{2^m}{k\underset{\cdot}{\nmid}}$. Hence, it follows that for any given $\epsilon>0$, there exists an $N\in \mathbb{N}$ such that  $\underset{k=N+1}{\overset{\infty}{\sum}}|a_{k}^{n+1}||r^{k}| < \epsilon$ for any $n\in \mathbb{N}$ and any $|r|\leq R$. In other words, as show in Lemma 4.1, in each iteration the number of terms with Airy coefficients increases by two. As $n\to \infty$, the sum of terms with Airy coefficients converges to the solution of ~\eqref{Airy3n}. However, the sum of the terms with non-Airy coefficients converges to zero as $n\to \infty$.
\end{proof}
\renewcommand\qedsymbol{$\blacksquare$}
\section{Summary and Conclusions}
In this paper, we have formulated the VIM in a way that makes it a useful tool for studying a class of Klein-Gordon equations with variable coefficients. Particularly, we applied the VIM to a Klein-Gordon equation with a linear potential. It is assumed that the solutions to such Klein-Gordon equation can be represented by a power series. We proved that the sequence of approximate solutions produced by the iteration scheme converges uniformly to the true solutions of the Klein-Gordon equation. The sequence of iterative solutions have been studied for two particular Lagrange multipliers. It is computationally shown that the rate of the convergence to the exact solution differs for each Lagrange multiplier. Work is in progress to generalize the the result of the current work to the case where all terms in the power series of Lagrange multiplier contribute to the iteration formula and to the Klein-Gordon equation in the presence of a potential of more complicated spatial dependence and physically relevant initial data \cite{mythesis}.In this paper, we give a  formulation of  the variational iteration method that makes it suitable  for the analysis of the solutions of Klein-Gordon equations with variable coefficients.
 We particularly study a Klein-Gordon problem which has solutions in terms of Airy functions. We prove that the sequence of approximate solutions generated by the variational iteration method for such Klein-Gordon equation converges to Airy functions.

\section{Acknowledgments}

\end{document}